\newtheorem{thr}{Theorem}
\newtheorem{lem}[thr]{Lemma}
\newtheorem{cor}[thr]{Corollary}
\theoremstyle{definition}
\newtheorem{defn}[thr]{Definition}
\newtheorem{prob}[thr]{Problem}
\theoremstyle{remark}
\numberwithin{equation}{section}
\begin{document}

\title[On the complexity of local graph coloring]{On the complexity of graph coloring \\ with additional local conditions} 

\author{Yaroslav Shitov}
\address{National Research University Higher School of Economics, 20 Myasnitskaya Ulitsa, Moscow 101000, Russia}
\email{yaroslav-shitov@yandex.ru}

\subjclass[2010]{05C38, 68Q17}
\keywords{Graph coloring, fixed-parameter tractability}

\begin{abstract}
Let $G=(V,E)$ be a finite simple graph. Recall that a \textit{proper coloring} of $G$ is a mapping $\varphi:V\to\{1,\ldots,k\}$ such that every color class induces an independent set. Such a $\varphi$ is called a \textit{semi-matching coloring} if the union of any two consecutive color classes induces a matching. We show that the semi-matching coloring problem is NP-complete for any fixed $k\geqslant 3$, and we get the same result for another version of this problem in which any triangle of $G$ is required to have vertices whose colors differ at least by three. 
\end{abstract}

\maketitle

In this paper, we establish the algorithmic complexity of the two graph coloring problems mentioned in the abstract. They have attracted some attention in recent publications, including the paper~\cite{Haj} by Hajiabolhassan on a semi-matching generalization of Kneser's conjecture. The other notion, known in the literature as a \textit{local $k$-coloring}, has been studied in several papers both from the point of view of Kneser's conjecture and as a concept deserving an independent interest (see~\cite{CSSZ, LZSX, OP} and references therein). As a reformulation of what is said in the abstract, one can define a local $k$-coloring as a mapping $\varphi:V\to\{1,\ldots,k\}$ such that for each set $S$ of either $2$ or $3$ vertices, there exist $u, v \in S$ such that the number $|\varphi(u)-\varphi(v)|$ is greater than or equal to the number of edges passing between the vertices in $S$. Li, Shao, Zhu, Xu proved in~\cite{LZSX} that the local $k$-coloring problem is NP-complete for $k=4$ and for any fixed odd $k\geqslant 5$; they posed a problem to determine the complexity in the remaining cases of $k=3$ and even $k\geqslant 6$. We solve this problem, and actually we prove the NP-completeness of both the local and semi-matching coloring problems for arbitrary fixed $k$ in a unified manner.

\section{Preliminaries}

Throughout the rest of our paper, we assume that $k\geqslant 3$ is a fixed integer. For ease of reference, we formulate the problems we are going to study below (these problems would become trivial if $k$ was less than $3$).

\begin{prob}\label{prob11} Given: A finite simple graph $G$.

\noindent Question 1: Is there a local $k$-coloring of $G$?

\noindent Question 2: Is there a semi-matching $k$-coloring of $G$?
\end{prob}

The main result of this paper is as follows.

\begin{thr}\label{thrmain}
Questions~1 and~2 in Problem~\ref{prob11} are NP-complete.
\end{thr}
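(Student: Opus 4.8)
The plan is to verify NP-membership directly and to prove NP-hardness by a single polynomial reduction — I would reduce from graph $3$-colorability — producing in every case a \emph{triangle-free} graph; triangle-freeness is precisely what makes one reduction settle Questions~1 and~2 simultaneously.

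Membership in NP is clear: given a candidate $\varphi$, one checks in time $O(|V|^3)$ that every set $S$ of at most three vertices obeys the required inequality, or equivalently (in the semi-matching formulation) that $\varphi$ is proper and that each of the graphs induced on $\varphi^{-1}(\{i\})\cup\varphi^{-1}(\{i+1\})$ has maximum degree at most $1$. For the unification, observe that in a triangle-free graph no three vertices carry three edges, so a local $k$-coloring is nontrivially constrained only on sets $S=\{u,v,w\}$ inducing a path $u-v-w$; and, given that $\varphi$ is proper, such an $S$ fails the local condition exactly when $\varphi(u)=\varphi(w)$ and $|\varphi(u)-\varphi(v)|=1$, i.e.\ exactly when $v$ has two neighbours inside a single consecutive color class. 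Hence on a triangle-free graph ``local $k$-coloring'' and ``semi-matching $k$-coloring'' coincide, and it is enough to show that recognizing triangle-free semi-matching $k$-colorable graphs is NP-hard.

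For the reduction I would start from a graph $H$ whose $3$-colorability is in question (and which we may take triangle-free, as $3$-colorability is already NP-complete there). The idea is to attach to suitable ``variable'' vertices a \emph{palette gadget}: a fixed triangle-free graph $\Pi=\Pi(k)$ with a distinguished vertex $x$, built so that in every semi-matching $k$-coloring of $\Pi$ the color of $x$ lies in a prescribed three-element set $T=T(k)$, every value in $T$ is realizable, and $\Pi$ can be completed ``inertly'', meaning that no neighbour of $x$ inside $\Pi$ uses a color at distance $1$ from $\varphi(x)$, so that adding further edges at $x$ produces no new obstruction. For each edge of $H$ one adds an \emph{inequality gadget} forcing the two relevant variable vertices to differ; when the three elements of $T$ are pairwise non-consecutive — arrangeable as $T=\{1,3,5\}$ once $k\geqslant 5$, since then every proper coloring valued in $T$ is automatically semi-matching — the inequality gadget can simply be an edge. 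Identifying the colors of $H$ with the elements of $T$, one then checks that proper $3$-colorings of $H$ correspond to semi-matching $k$-colorings of the resulting triangle-free polynomial-size graph $G$: a coloring of $H$ lifts by coloring the variable vertices through $T$ and completing each $\Pi$ inertly, and any semi-matching $k$-coloring of $G$ confines the variable vertices to $T$ and respects the inequalities, hence projects down.

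The real difficulty is twofold: building the palette gadget \emph{inside the triangle-free world} (so the usual trick of pinning colors with a clique is unavailable), and making the construction work \emph{uniformly in $k$}. For $k\geqslant 5$ the spread-out palette $\{1,3,5\}$ keeps the matching conditions from ever being stressed and the argument is nearly mechanical; but for $k=3$ and $k=4$ no triple of colors is pairwise non-consecutive, so $T$ must contain a consecutive pair, a proper $T$-valued coloring of $G$ need no longer be semi-matching (two neighbours of a variable vertex could both take the adjacent color), and moreover for $k=3$ one cannot even afford high-degree variable vertices, since in that case a vertex of degree at least $3$ is automatically barred from the middle color. Overcoming this requires redesigning the variable and inequality gadgets — for instance routing each constraint through an auxiliary vertex pinned to the ``isolated'' element of $T$, splitting high-degree vertices through equality gadgets, or inserting short paths that absorb the slack — while scrupulously avoiding triangles. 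Producing one description valid for every $k\geqslant 3$, and verifying the finite but delicate list of local conditions on the gadgets, is where essentially all of the work lies; the surrounding reduction is then routine.
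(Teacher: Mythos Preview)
Your strategy is genuinely different from the paper's: the paper reduces from NAE $3$-SAT and uses gadgets full of triangles (the graphs $C(n,r)$ and $K_{k-2}$), treating the local and semi-matching cases by two parallel gadget families $\mathcal{L}_k$ and $\mathcal{SM}_k$, whereas you aim at $3$-colorability and try to stay triangle-free so that one construction covers both questions at once. That unification is elegant in conception, and your observation that local and semi-matching $k$-colorings coincide on triangle-free graphs is correct and useful.

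But the proposal has a real gap, and you name it yourself: the palette gadget $\Pi(k)$ is never constructed. You state its specification --- triangle-free, forces $\varphi(x)\in T$, every value in $T$ realizable, ``inert'' completions --- and then say that building it ``is where essentially all of the work lies''; that work is not done. Nothing in the write-up establishes that such a $\Pi(k)$ exists, let alone exhibits one, and existence is not obvious precisely because the standard device for pinning colors (attach a large clique) is what triangle-freeness forbids. For $k\in\{3,4\}$ you go further and concede that the entire variable/inequality architecture must be redesigned, listing several candidate fixes (auxiliary pinned vertices, equality-gadget splittings, slack-absorbing paths) without committing to or verifying any of them. As written, the proposal is a roadmap that stops exactly at the hard step; the paper's proof, by contrast, is complete because its gadgets $\Gamma_{k-2}$, $\mathcal{L}_k$, $\mathcal{SM}_k$ are explicit and their coloring behaviour is checked via the cited results on $C(n,r)$ and on graphs with all three chromatic numbers equal.
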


It is easy to see that both questions in Problem~\ref{prob11} belong to NP, and we are going to prove their NP-hardness by constructing polynomial reductions from the problem known as NAE $3$-SAT. Recall that a \textit{literal} (corresponding to a set $X$ of Boolean variables) is either an element of $X$ or a negation of it.

\begin{prob}\label{prob13} (NAE $3$-SAT.)

\noindent Given: A set $X$ of variables and a set $L$ of triples of literals.

\noindent Question: Does there exist an assignment $X\to\{\mbox{True},\mbox{False}\}$ for which every triple in $L$ has at least one false literal and at least one true literal?
\end{prob}

\section{Earlier results that we use}

First of all, we recall that NAE $3$-SAT is an NP-complete problem (see~\cite{Scha}), so the existence of polynomial reductions from Problem~\ref{prob13} to the questions in Problem~\ref{prob11} would indeed imply Theorem~\ref{thrmain}. Further, let us recall that the \textit{chromatic number} of a graph $G$ is the smallest integer $\chi$ such that $G$ admits a proper $\chi$-coloring; the local and semi-matching chromaric numbers are defined analogously for the corresponding types of colorings. The following appears as Theorem~2.1 in~\cite{LSZX}.

\begin{thr}\label{thrcon}
For any integer $\tau\geqslant1$, there exists a graph whose chromatic number, semi-matching chromatic number, and local chromatic number are all equal to $\tau$.
\end{thr}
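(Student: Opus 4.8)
The plan is to produce, for each $\tau\geqslant 1$, a single graph together with one coloring that is simultaneously proper, semi-matching, and local, and whose chromatic number is exactly $\tau$. The point of departure is the trivial observation that any semi-matching coloring and any local coloring is in particular a proper coloring (for the local case, the defining condition applied to the two-element sets spanning an edge already forces properness), so that $\chi(G)\leqslant\chi_{sm}(G)$ and $\chi(G)\leqslant\chi_{loc}(G)$ for every graph $G$. Hence it suffices to exhibit a triangle-free graph $G_\tau$ with $\chi(G_\tau)=\tau$ admitting a proper $\tau$-coloring $\varphi$ with color classes $V_1,\dots,V_\tau$ such that $G_\tau[V_i\cup V_{i+1}]$ is a matching for every $i$. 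Indeed, for a triangle-free graph this single condition makes $\varphi$ a semi-matching coloring by definition, and it also makes $\varphi$ a local coloring: the triangle clause of the local condition is vacuous, while for an induced path $a-b-c$ the three pairwise color differences can all be at most $1$ only if $\varphi(a)=\varphi(c)=\varphi(b)\pm 1$, i.e.\ only if $b$ has two neighbors of one color in an adjacent class, which is exactly a failure of the matching condition. With such a $G_\tau$ at hand, $\chi$, $\chi_{sm}$, and $\chi_{loc}$ all equal $\tau$.

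For $\tau\leqslant 3$ one may take $K_1$, $K_2$, and $C_5$ colored $1,3,1,3,2$, so the real content lies in constructing $G_\tau$ for larger $\tau$. I would start from a triangle-free graph $H$ with $\chi(H)=\tau$ — such graphs are classical (iterated Mycielskians of $K_2$, Kneser graphs with appropriate parameters, or random graphs of suitable density) — but note that the standard proper $\tau$-colorings of these families violate the matching condition: in each case some vertex has two equally colored neighbors in an adjacent class. The construction therefore has to be reshaped so that all of the ``chromatic weight'' is carried by edges joining color classes at distance $\geqslant 2$, leaving only matchings between consecutive classes. Two routes seem natural: a recursive one, in which a good coloring of $G_{\tau-1}$ is extended to $G_\tau$ by a Mycielski-type step whose apex and shadow vertices are given colors lying far from the range already in use (so that they create no small color differences), while ensuring that the newly introduced color occurs at most once in the neighborhood of each vertex; and a direct one, in which the classes $V_1,\dots,V_\tau$ are prescribed in advance, a high-chromatic triangle-free pattern is placed on the distance-$\geqslant 2$ pairs and matchings on the consecutive pairs, and one then argues that no $(\tau-1)$-coloring exists.

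The step I expect to be the main obstacle is precisely this last tension: pushing the chromatic number of a triangle-free graph up to $\tau$ requires a dense obstruction of Kneser or Mycielski type, while the matching condition forbids consecutive color classes from being densely joined, so the entire obstruction must be squeezed into the distance-$\geqslant 2$ part of the graph. A naive attempt — blowing up a fixed ``class graph'' on $\{1,\dots,\tau\}$ — does not work, because blowing up vertices into independent sets leaves the chromatic number unchanged, whereas blowing them up into any graph containing an edge immediately creates triangles. The obstruction therefore has to be engineered with more care, presumably via a topological (Borsuk--Ulam-type) lower bound adapted to the layered structure, or via a tightly controlled inductive argument. Once $\chi(G_\tau)\geqslant\tau$ is established alongside the explicit good coloring, the theorem follows from the reduction of the first paragraph.
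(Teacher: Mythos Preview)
The paper does not actually prove this theorem: it is quoted as Theorem~2.1 of~\cite{LSZX} and used as a black box, so there is no ``paper's own proof'' to compare against beyond the citation.

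Your proposal, by contrast, is not a proof but a research outline. The reduction in your first paragraph is sound: for a triangle-free graph, a proper $\tau$-coloring whose consecutive color classes induce matchings is automatically both a semi-matching and a local $\tau$-coloring, and together with $\chi=\tau$ this pins all three invariants to $\tau$. The gap is that you never actually produce such a graph for general $\tau$. You survey Mycielski-type and Kneser-type candidates, note that their standard colorings violate the matching condition, sketch two possible repair strategies, and then explicitly flag the construction as ``the main obstacle'' without resolving it. A proof needs, at minimum, one concrete family $G_\tau$ together with (i) a verified good $\tau$-coloring and (ii) an argument that $\chi(G_\tau)\geqslant\tau$; you supply neither. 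As written, the proposal establishes only the easy implication (the upper bounds $\chi_{sm},\chi_{loc}\leqslant\tau$ \emph{conditional} on the existence of $G_\tau$) and defers the substantive content to an unspecified ``topological (Borsuk--Ulam-type)'' or ``tightly controlled inductive'' argument that is never carried out.
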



\begin{cor}\label{corcon}
For any integer $\tau\geqslant1$, there exists a graph $\Gamma_\tau$ such that the chromatic number, semi-matching chromatic number, and local chromatic number of $\Gamma_\tau$ and any graph obtained from $\Gamma_\tau$ by removing at most two vertices are all equal to $\tau$.
\end{cor}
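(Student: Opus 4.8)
The plan is to obtain $\Gamma_\tau$ as the disjoint union of three copies of the graph $G$ furnished by Theorem~\ref{thrcon}, whose chromatic, semi-matching chromatic, and local chromatic numbers are all equal to $\tau$. The point of taking three copies is that deleting at most two vertices from $\Gamma_\tau$ leaves at least one copy of $G$ entirely untouched, so every graph $H$ obtained from $\Gamma_\tau$ by removing at most two vertices contains $G$ as an induced subgraph and is itself an induced subgraph of $\Gamma_\tau$. Thus it suffices to prove two things: (i) each of the three chromatic parameters is monotone under passing to induced subgraphs, and (ii) the disjoint union of finitely many copies of $G$ has the same three parameters as $G$, namely $\tau$. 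Together these sandwich all three parameters of $H$ between $\tau$ and $\tau$.

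For (i), I would observe that for an induced subgraph $H$ of a graph $H'$, the restriction to $V(H)$ of any proper (resp.\ semi-matching, resp.\ local) $k$-coloring of $H'$ is again a coloring of the same type: each defining condition is universally quantified over color classes or over vertex subsets, and the edge set of $H$ inside any such subset coincides with that of $H'$, so deleting vertices only discards constraints. Hence none of the three parameters increases when one removes vertices; in particular the parameters of $H$ are at most those of $\Gamma_\tau$, and at least those of $G$ since $G$ is an induced subgraph (indeed a connected component) of $H$.

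For (ii), the lower bound is immediate because $G$ is a component of the disjoint union. For the upper bound I would assemble a coloring of the union from valid $\tau$-colorings of the individual copies and check it is still valid. Any vertex subset $S$ not contained in a single copy spans no edges when $|S|=2$, and spans at most one edge when $|S|=3$ (the two endpoints of that edge already get distinct colors), so the local condition is met; likewise, for consecutive color classes, their union meets each copy in a matching and carries no edge between distinct copies, hence is a matching, so the semi-matching condition is met. This yields semi-matching and local chromatic numbers at most $\tau$, and the chromatic number of a disjoint union being the maximum over components is standard. Combining (i) and (ii) gives the corollary.

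I do not expect a genuine obstacle here; the only step deserving care is the verification in (ii) that the semi-matching and local properties survive disjoint unions, which rests on the elementary observation that a three-vertex set straddling two components of $\Gamma_\tau$ carries at most one edge.
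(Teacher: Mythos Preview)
Your proof is correct and takes exactly the same approach as the paper: you build $\Gamma_\tau$ as the disjoint union of three copies of the graph from Theorem~\ref{thrcon}. The paper's proof is a single sentence to that effect; you simply supply the routine monotonicity and disjoint-union verifications that the paper leaves implicit.
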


\begin{proof}
Take a disjoint union of three copies of the graph as in Theorem~\ref{thrcon}.
\end{proof}

For any positive integers $n,r$ satisfying $n\geqslant 2r$, we denote by $C(n,r)$ the graph obtained from a complete graph on $n$ vertices by removing $r$ non-adjacent edges. 

\begin{thr}\label{thrc} \rm{(See~\cite{CSSZ}.)}
The local chromatic number of $C(n,r)$ is $\lfloor 1.5n-0.5\rfloor - r$.
\end{thr}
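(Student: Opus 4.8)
The plan is to prove the equality $\chi_{\mathrm{loc}}(C(n,r))=\lfloor 1.5n-0.5\rfloor-r$ in two halves: an explicit optimal colouring for the upper bound, and a counting argument over ``blocks'' of colours for the lower bound. I would begin by translating the local conditions on $G=C(n,r)$ into usable combinatorics. Write $V=\{a_1,c_1,\dots,a_r,c_r\}\cup\{b_1,\dots,b_{n-2r}\}$, where the deleted edges are $a_1c_1,\dots,a_rc_r$ and every other pair of vertices is adjacent. Any three vertices span at least two edges (they contain at most one deleted edge), so the size-$3$ requirements reduce to: a triple forming a triangle must have two vertices with colours differing by at least $3$ (``spread $\ge 3$''), while a triple $\{a_j,c_j,x\}$ must have two vertices with colours differing by at least $2$; the size-$2$ requirement is just properness. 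From this I would extract two facts. First, a colour class of size $2$ must be some $\{a_j,c_j\}$, and if it receives colour $p$ then colours $p-1$ and $p+1$ are used by no vertex at all (apply the $\{a_j,c_j,x\}$ condition to every other $x$). Second, if three consecutive integers $c,c+1,c+2$ are all used as colours, then by the first fact the three corresponding classes are singletons, and since those three vertices cannot have spread $\ge 3$ two of them are non-adjacent, hence form a deleted edge $\{a_l,c_l\}$ with $\{a_l\}$ and $\{c_l\}$ both singleton classes.

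For the upper bound I would use $\varphi(a_j)=\varphi(c_j)=2j-1$ for $j\le r$ and colour $b_1,b_2,\dots,b_{n-2r}$ by the sequence $2r+1,\,2r+2,\,2r+4,\,2r+5,\,2r+7,\dots$ (gaps alternating $1,2$). The colouring is proper, no three consecutive integers are used, so every triangle has spread $\ge 3$, and in a triple $\{a_j,c_j,x\}$ the colour of $x$ differs from $2j-1$ by at least $2$ (an even difference if $x$ is matched, and at least $(2r+1)-(2r-1)$ otherwise). A direct computation shows the largest colour equals $\lfloor 1.5n-0.5\rfloor-r$.

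For the lower bound, let $\varphi$ be a local $k$-colouring; after shifting and trimming we may assume the used colours lie in $\{1,\dots,k\}$ with $1$ and $k$ used. Let $d$ be the number of size-$2$ classes, so exactly $n-d$ colours are used; split the used colours into $p$ maximal runs (``blocks'') of consecutive integers separated by nonempty gaps, of sizes $b_1,\dots,b_p$, so that $k\ge(n-d)+(p-1)$. In a block of size $b\ge2$ every colour is a singleton colour, and each of its $b-2$ windows of three consecutive colours forces a deleted edge whose two distinct singleton colours lie in that window; a single deleted edge covers at most two consecutive such windows, so the block must absorb at least $\lceil(b-2)/2\rceil$ distinct deleted edges, and deleted edges charged to different blocks are distinct. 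With $v_i$ the number of vertices coloured inside block $i$ and $\mu_i=\lceil(b_i-2)/2\rceil$, one verifies $v_i\le 2\mu_i+2$ for every block (a size-$1$ block carries at most $2$ vertices, and a size-$b\ge2$ block carries $b\le 2\lceil(b-2)/2\rceil+2$ vertices). Summing gives $n=\sum_i v_i\le 2p+2\sum_i\mu_i\le 2p+2(r-d)$, hence $p\ge\lceil n/2\rceil-(r-d)$, and therefore $k\ge(n-d)+\lceil n/2\rceil-(r-d)-1=n+\lceil n/2\rceil-r-1=\lfloor 1.5n-0.5\rfloor-r$.

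The main obstacle, I expect, is the lower-bound bookkeeping: proving cleanly that a block of $b$ consecutive colours must swallow at least $\lceil(b-2)/2\rceil$ of the deleted edges --- an interval-covering estimate for how the ``escape'' provided by the second fact can be reused across overlapping windows --- and checking that the per-block inequality $v_i\le2\mu_i+2$ is exactly tight enough to reach $\lfloor1.5n-0.5\rfloor-r$. Pinning down the second fact itself, in particular that within a run of three consecutive used colours no class can have size $2$, is a small but load-bearing point.
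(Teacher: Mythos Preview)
The paper does not prove this theorem at all: it is quoted verbatim from Chartrand--Saba--Salehi--Zhang~\cite{CSSZ}, so there is no in-paper argument to compare against. Your proposal therefore goes well beyond what the paper does.

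On its own merits, your argument is correct. The upper-bound colouring is valid: no three consecutive integers occur among the colours $1,3,\dots,2r-1,2r+1,2r+2,2r+4,\dots$, so every triangle has spread at least $3$; every path $\{a_j,c_j,x\}$ has $|\varphi(x)-(2j-1)|\geqslant 2$; and the top colour is indeed $\lfloor 1.5n-0.5\rfloor-r$. For the lower bound, the two ``facts'' are exactly right (a doubled colour must be an $\{a_j,c_j\}$ pair and isolates its colour by a gap on each side; a run of three used colours forces a deleted edge inside that window), and the block bookkeeping closes: with $d$ doubled pairs the used colours number $n-d$; a block of length $b_i\geqslant 2$ has $b_i-2$ windows, each covered by some deleted edge whose two singleton colours lie in that block, and one deleted edge can serve at most two consecutive windows, giving at least $\lceil (b_i-2)/2\rceil$ deleted edges per block, disjoint across blocks and disjoint from the $d$ doubled pairs. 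The chain
\[
n=\sum_i v_i\leqslant 2p+2\sum_i\mu_i\leqslant 2p+2(r-d),\qquad k\geqslant (n-d)+(p-1),
\]
then yields $k\geqslant n+\lceil n/2\rceil-r-1=\lfloor 1.5n-0.5\rfloor-r$, as you claim. The only cosmetic point: when you set $\mu_i=\lceil(b_i-2)/2\rceil$ for a size-$1$ block, note that this equals $0$, so the inequality $v_i\leqslant 2$ for such blocks is what you actually use there; you state this, but it is worth flagging that the formula $\mu_i=\lceil(b_i-2)/2\rceil$ is being extended by convention.
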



\section{The equality gadget}

In this section, we construct polynomial reductions to local and semi-matching $k$-colorings from more general problems that we are ready to present. (We recall that $k\geqslant 3$ is an integer parameter fixed in advance.)

\begin{prob}\label{prob14} $ $

\noindent Given: A simple graph $G=(V,E)$; a sequence $U$ of subsets $U_1,\ldots,U_t\subset V$.

\noindent Question: Is there a local $k$-coloring of $G$ in which, for every $i\in\{1,\ldots,t\}$, all the vertices of $U_i$ get the same color, and this color is either $1$ or $k$?
\end{prob}

Let us define a `semi-matching version' of Problem~\ref{prob14} by replacing the word `local' with the word `semi-matching' in its formulation. We define the graph $\mathcal{G}(G,U)$ by adjoining to $G$ the $t$ copies of the graph $\Gamma_{k-2}$ as in Corollary~\ref{corcon} and drawing the edges between every vertex in $U_i$ and every vertex in the corresponding (that is, $i$th) copy of $\Gamma_{k-2}$. Clearly, $\mathcal{G}$ can be computed in polynomial time as a function of $G$ and $U$.

\begin{lem}\label{lemred}
$\mathcal{G}$ is a reduction (i) from Problem~\ref{prob14} to Question~1 in Problem~\ref{prob11}, (ii) from the semi-matching version of Problem~\ref{prob14} to Question~2 in Problem~\ref{prob11}.
\end{lem}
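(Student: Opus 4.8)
The plan is to show that $\mathcal{G}(G,U)$, as a graph, admits a local (resp. semi-matching) $k$-coloring if and only if the instance $(G,U)$ is a yes-instance of Problem~\ref{prob14} (resp. its semi-matching version). I would prove both directions for both colorings in parallel, since the argument is structurally identical; the only property of local and semi-matching colorings that I would use is that they are hereditary under vertex deletion and that the restriction of such a coloring to an induced subgraph is again a coloring of the same type.

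First I would handle the ``if'' direction. Suppose $(G,U)$ is a yes-instance, witnessed by a local (resp. semi-matching) $k$-coloring $\varphi$ of $G$ in which every $U_i$ is monochromatic with color in $\{1,k\}$. I extend $\varphi$ to the $i$th copy of $\Gamma_{k-2}$ as follows: by Corollary~\ref{corcon}, $\Gamma_{k-2}$ has a proper (hence local, hence semi-matching) coloring with the $k-2$ colors $\{2,\ldots,k-1\}$. Now I need to check that gluing this in does not violate any local or semi-matching constraint. Every new edge runs between a vertex $u\in U_i$, colored $1$ or $k$, and a vertex $w$ in the copy of $\Gamma_{k-2}$, colored in $\{2,\ldots,k-1\}$; hence $|\varphi(u)-\varphi(w)|\ge\min(|1-(k-1)|,|k-2|)=k-2\ge 1$, so the endpoints are properly colored, and more is true: for any edge $uw$ of $\mathcal{G}$ with $u\in U_i$ the colors differ by at least $k-2$. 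To verify the local condition I would examine every triple $S$ of vertices; the only nontrivial triples are those meeting both $G$ and a copy of $\Gamma_{k-2}$, or lying entirely in one of the two parts (where the hypothesis already gives what we need). For a triple with one vertex in $\Gamma_{k-2}^{(i)}$ and two in $G$, or two in $\Gamma_{k-2}^{(i)}$ and one in $G$, I would exhibit a pair inside $S$ whose color difference is at least the number of edges among $S$, using that any edge incident to $U_i$ has a color gap of $k-2$ and that there are at most three edges in a triple. For the semi-matching condition I would check that the union of color classes $j$ and $j+1$ still induces a matching: a vertex in the copy of $\Gamma_{k-2}$ has all its $\mathcal{G}$-neighbours either inside that copy or in $U_i$, and the latter have colors $1$ or $k$, which are not consecutive with any color in $\{2,\ldots,k-1\}$ except at the two ends, where one checks the degree-in-two-classes bound directly.

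For the ``only if'' direction, suppose $\mathcal{G}(G,U)$ has a local (resp. semi-matching) $k$-coloring $\psi$. Restricting $\psi$ to $G$ immediately yields a coloring of $G$ of the required type, so it remains to show that each $U_i$ is monochromatic with color in $\{1,k\}$. Fix $i$ and consider the copy $\Gamma_{k-2}^{(i)}$ together with its neighbours in $U_i$. The key point is that $\psi$ restricted to $\Gamma_{k-2}^{(i)}$ is a proper coloring using some set $C$ of colors, and by Corollary~\ref{corcon} we have $|C|\ge k-2$ even after deleting up to two vertices of $\Gamma_{k-2}^{(i)}$; so the vertices of any $U_i$ can ``avoid'' at most a bounded number of colors. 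More precisely, I would argue: if some vertex $u\in U_i$ received a color $c\notin\{1,k\}$, then $c$ has two neighbouring colors $c-1$ and $c+1$, and since every vertex of $\Gamma_{k-2}^{(i)}$ is adjacent to $u$, the local/semi-matching constraint between $u$ and the vertices of $\Gamma_{k-2}^{(i)}$ forbids colors from a small neighbourhood of $c$ on those vertices; quantifying this, the colors available to $\Gamma_{k-2}^{(i)}$ shrink below $k-2$, contradicting the corollary (this is where I use that the corollary is robust under removing two vertices — to absorb the couple of vertices that might legitimately share a color-neighbourhood with $u$). A similar but simpler count shows all of $U_i$ gets the \emph{same} one of $\{1,k\}$: if $u,u'\in U_i$ had colors $1$ and $k$ respectively, then every vertex of $\Gamma_{k-2}^{(i)}$ is adjacent to both, forcing its color into $\{2,\ldots,k-1\}$ and simultaneously, via triples $\{u,u',w\}$, into an even narrower band, again contradicting $|C|\ge k-2$ on a two-vertex-deleted copy.

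The main obstacle is the ``only if'' direction, and specifically the careful bookkeeping of exactly how many colors each attachment of an out-of-range or split $U_i$ forbids on $\Gamma_{k-2}^{(i)}$: one must be sure the forbidden set, after accounting for the two-vertex slack built into $\Gamma_{k-2}$, still drops the palette strictly below $k-2$. I expect the local-coloring case to be slightly fussier than the semi-matching case because one has to range over all triples rather than just edges and pairs of consecutive classes, but the counting is finite and uniform in $k$, so it goes through; the role of Corollary~\ref{corcon} (rather than just Theorem~\ref{thrcon}) is precisely to make this slack available, which is why the two-vertex-deletion robustness was isolated there.
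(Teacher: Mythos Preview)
Your forward direction has a genuine gap. You color every copy of $\Gamma_{k-2}$ with the palette $\{2,\ldots,k-1\}$ regardless of whether $U_i$ received color $1$ or $k$, and then claim that any edge between $U_i$ and the copy has color gap at least $k-2$. The displayed inequality $|\varphi(u)-\varphi(w)|\ge\min(|1-(k-1)|,|k-2|)=k-2$ is computing the \emph{best} case, not the worst: if $U_i$ is colored $1$ and some $w$ in the copy is colored $2$, the gap is $1$. Since $\Gamma_{k-2}$ is a disjoint union of three copies of a fixed graph, its $(k-2)$-coloring will in general have several vertices of color $2$; every one of them is adjacent to every vertex of $U_i$, so the union of color classes $1$ and $2$ contains a vertex of degree $\ge 2$ and the semi-matching condition fails. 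For the local version, take $u\in U_i$ colored $1$ together with adjacent $w_1,w_2$ in the copy colored $2$ and $3$: this is a triangle whose maximum color difference is $2$. (Incidentally, ``proper (hence local, hence semi-matching)'' has the implications reversed; a proper coloring need not be local or semi-matching.)

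The paper fixes exactly this by making the palette for the $i$th copy depend on the color of $U_i$: use $\{3,\ldots,k\}$ if $U_i$ is colored $1$ and $\{1,\ldots,k-2\}$ if $U_i$ is colored $k$. Then every cross edge has color gap at least $2$, which is what the triple and consecutive-class checks actually need. Your backward direction is essentially the right idea---a single $u\in U_i$ of color $c$ forbids $c$ entirely on the copy and allows $c-1$ and $c+1$ at most once each, so removing those at most two vertices leaves a proper $(k-3)$-coloring, contradicting Corollary~\ref{corcon}---but the sketch should be made this explicit; in particular the triple $\{u,u',w\}$ with $\varphi(u)=1$, $\varphi(u')=k$ gives no extra constraint (the pair $u,u'$ already has gap $k-1$), so the argument for monochromaticity of $U_i$ must proceed via the two separate ``at most one vertex of color $2$'' and ``at most one of color $k-1$'' observations rather than via that triple.
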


\begin{proof}
If we have a coloring of $G$ as in Problem~\ref{prob14}, then we can extend it to $\mathcal{G}$ as follows. Namely, the vertices in $U_i$ got the same color $1$ (or $k$), and we produce the local $(k-2)$-coloring of the $i$th copy of $\Gamma_{k-2}$ using indexes $3,4,\ldots,k$ (or $1,2,\ldots,k-2$, respectively). It is straightforward to check that the resulting assignment is a local $k$-coloring of $\mathcal{G}$ (or a semi-matching coloring if we consider the semi-matching version of Problem~\ref{prob14}).

Conversely, assume that we have a local (or semi-matching) $k$-coloring of $\mathcal{G}$. If one of the vertices in one of the $U_i$'s has color $c$, then no vertex in the $i$th copy of $\Gamma_{k-2}$ can have color $c$, at most one vertex in that copy can have index $c-1$, and at most one vertex in the copy can have index $c+1$. Therefore, if there are two vertices in $U_i$ of different colors, or there is a vertex of color different from $1$ and $k$, we get a proper $(k-3)$ coloring of a graph obtained from $\Gamma_{k-2}$ by removing two vertices. This contradicts Corollary~\ref{corcon} and shows that the vertices in every $U_i$ get the same color, and this color is either $1$ or $k$.
\end{proof}

\section{The NAE gadget}

This section is devoted to auxiliary results that will allow us to encode the NAE $3$-SAT problem as an instance of Problem~\ref{prob14}. We need to define a sequence of graphs which we denote by $L(k)$; we set $L(2\tau+2)=C(2\tau,\tau)$ and $L(2\tau+3)=C(2\tau,\tau-1)$ for any integer $\tau\geqslant 1$, and we define $L(3)$ to be the graph with one vertex.

\begin{defn}\label{defnLK}
We construct the graph $\mathcal{L}_k(u_1,u_2,v)$ as follows. We take two disjoint copies of $L(k)$ and three new vertices $u_1,u_2,v$, and draw new edges from $u_1$ to every vertex of the first copy, from $u_2$ to every vertex of the second copy, and from $v$ to every vertex in both copies.
\end{defn}

\begin{lem}\label{lemLK1}
$\mathcal{L}_k(u_1,u_2,v)$ has no local $k$-coloring in which $u_1,u_2,v$ have colors $1,1,k$ or $k,k,1$, respectively.
\end{lem}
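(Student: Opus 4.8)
The plan is to argue by contradiction, so suppose $\varphi$ is a local $k$-coloring of $\mathcal{L}_k(u_1,u_2,v)$ with $(\varphi(u_1),\varphi(u_2),\varphi(v))$ equal to $(1,1,k)$ or to $(k,k,1)$. The involution $c\mapsto k+1-c$ on the color labels leaves every difference $|\varphi(p)-\varphi(q)|$ unchanged, hence carries local $k$-colorings to local $k$-colorings while interchanging the two forbidden triples; so it suffices to rule out $\varphi(u_1)=\varphi(u_2)=1$, $\varphi(v)=k$. Each vertex $w$ of either copy of $L(k)$ is adjacent both to the relevant $u_i$ and to $v$, whose colors are distinct, so $\varphi(w)\in\{2,\dots,k-1\}$. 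For the small parameters $k\in\{3,4,5\}$ the graph $L(k)$ has at most two vertices, and a short finite case check disposes of these: one shows that some color is forced to occur in both copies of $L(k)$ (using the edges to $u_i$ inside one copy), while a $3$-element set of the form $\{w,v,w'\}$ with $w,w'$ in distinct copies forbids exactly that, a contradiction.

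For the main range $k\geqslant 6$, fix one copy $C$ of $L(k)$ inside $\mathcal{L}_k$; it is joined completely to a vertex $u\in\{u_1,u_2\}$ with $\varphi(u)=1$ and to $v$ with $\varphi(v)=k$. Two structural facts are the engine of the argument. First, the set of vertices of $C$ colored in $\{2,3\}$ is independent in $L(k)$: if $w\sim w'$ with $\varphi(w),\varphi(w')\in\{2,3\}$, then $\{w,w',u\}$ spans three edges, yet every pair among these three vertices has color difference at most $2$, violating the local condition. Symmetrically, replacing $u$ by $v$, the set of vertices of $C$ colored in $\{k-2,k-1\}$ is independent in $L(k)$.

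Using these, define a coloring $\psi$ of $L(k)$ on the vertex set of $C$ by $\psi(w)=2$ if $\varphi(w)\leqslant 3$, $\psi(w)=k-1$ if $\varphi(w)\geqslant k-2$, and $\psi(w)=\varphi(w)$ otherwise. Its set of color values lies in $\{2,k-1\}\cup\{4,\dots,k-3\}$, a set of at most $k-4$ elements. I claim $\psi$ is a local $k$-coloring of $L(k)$; granting this, the local chromatic number of $L(k)$ is at most $k-4$. But $L(k)$ is one of the graphs $C(n,r)$, and Theorem~\ref{thrc} gives local chromatic number $k-3$: for $k=2\tau+2$ one has $\lfloor 3\tau-0.5\rfloor-\tau=2\tau-1=k-3$, and for $k=2\tau+3$ one has $\lfloor 3\tau-0.5\rfloor-(\tau-1)=2\tau=k-3$. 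Since $k-3>k-4$, this is the desired contradiction.

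The main obstacle is verifying that $\psi$ is indeed a local $k$-coloring. Properness of $\psi$ relies on the two independence facts, since two adjacent vertices cannot both be recolored to $2$ (they would lie in a common independent set) nor both to $k-1$; here $k\geqslant 6$ is what guarantees that the values $2$, $k-1$, and the untouched window $\{4,\dots,k-3\}$ are pairwise distinct. For a $3$-element set $S$ one uses monotonicity of differences: if at most one endpoint of a pair $\{p,q\}$ is recolored then $|\psi(p)-\psi(q)|\geqslant|\varphi(p)-\varphi(q)|$, whereas a pair with both endpoints recolored either gets difference $k-3\geqslant 3$ (one to $2$, the other to $k-1$) or had original difference at most $1$ and so was not the pair witnessing the requirement for $S$ under $\varphi$; since $\varphi$ witnesses that requirement, so does $\psi$. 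The one place that needs care is bookkeeping these cases, in particular the degenerate $k=6$ where the window $\{4,\dots,k-3\}$ is empty and $L(6)=C(4,2)$ is a $4$-cycle.
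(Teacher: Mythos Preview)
Your proof is correct, but it follows a genuinely different route from the paper's. The paper's argument is shorter and uniform in $k$: since $v$ has color $k$, any two vertices $a,b$ in the $L(k)$-copies colored in $\{k-1,k\}$ would make $\{a,v,b\}$ a $2$-edge set with all pairwise differences at most $1$; hence at most one such vertex exists, so at least one copy of $L(k)$ uses only colors $1,\ldots,k-2$. That copy together with its $u_i$ induces a graph of the form $C(n',r')$ whose local chromatic number is $k-1$ by Theorem~\ref{thrc}, giving the contradiction immediately, with no recoloring and no small-$k$ case split. Your argument instead works inside a single copy: you exploit both $u_i$ and $v$ to force the $\{2,3\}$- and $\{k-2,k-1\}$-colored vertices into independent sets, then compress $\varphi$ to a local coloring $\psi$ of $L(k)$ using only $k-4$ values, contradicting the value $k-3$ that Theorem~\ref{thrc} gives for $L(k)$ itself. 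This costs you the recoloring verification and the ad~hoc treatment of $k\in\{3,4,5\}$ (where, incidentally, your parenthetical ``using the edges to $u_i$'' undersells things---you also need the triples through $v$ to pin down the forced color). On the other hand your approach has a pleasant byproduct: for $k\geqslant 6$ it never touches the second copy of $L(k)$, whereas the paper's pigeonhole step genuinely needs both copies to locate one that avoids colors $k-1$ and $k$.
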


\begin{proof}
Assume that such a coloring exists, and the vertices $u_1,u_2,v$ are assigned colors $1,1,k$ (the $k,k,1$ case is analogous). If there were two more vertices $a,b$ colored either $k-1$ or $k$ each, then $(a,v,b)$ would be a path colored with two consecutive indexes. This contradicts the definition of a local coloring, so one of the copies of $L(k)$ has colors $1,\ldots,k-2$ only. However, the union of this copy and the corresponding vertex $u_i$ induces the subgraph which does not admit a local $(k-2)$-coloring as we can check by Theorem~\ref{thrc}.
\end{proof}

\begin{lem}\label{lemLK2}
Any mapping $\varphi:\{u_1,u_2,v\}\to\{1,k\}$ can be extended to a local $k$-coloring of  $\mathcal{L}_k(u_1,u_2,v)$ unless $\varphi(u_1), \varphi(u_2), k+1-\varphi(v)$ are all equal.
\end{lem}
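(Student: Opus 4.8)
The plan is to treat separately the two configurations of $\varphi$ that survive once the forbidden pattern is removed. The equation $\varphi(u_1)=\varphi(u_2)=k+1-\varphi(v)$ fails precisely when $\varphi$ is constant on $\{u_1,u_2,v\}$ or when $\varphi(u_1)\ne\varphi(u_2)$. Two symmetries are available: the colour involution $c\mapsto k+1-c$, which sends local $k$-colorings to local $k$-colorings, and the relabelling that swaps $u_1$ with $u_2$ and the two copies of $L(k)$ inside $\mathcal{L}_k(u_1,u_2,v)$. Using these I would reduce to the two cases (A) $\varphi(u_1)=\varphi(u_2)=\varphi(v)=1$ and (B) $\varphi(u_1)=1$, $\varphi(u_2)=k$, $\varphi(v)=1$; in particular $\varphi(v)=1$ in both.

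In each case I extend $\varphi$ by putting a translated local coloring of $L(k)$ on each copy. By Theorem~\ref{thrc} the local chromatic number of $L(k)$ is $k-3$ for $k\geqslant 4$ (and $1$ for $k=3$), so $L(k)$ has a local coloring using only the colours $\{3,\dots,k-1\}$ when $k\geqslant4$, resp.\ the single colour $3$ when $k=3$; I would use such a coloring on every copy whose two neighbours among $u_1,u_2,v$ both have colour $1$ --- that is, on both copies in case (A) and on the first copy in case (B). Pushing all colours of such a copy up to at least $3$ is exactly what is needed: if $a$ is a vertex of the copy, $w,w'$ its two colour-$1$ neighbours, and $a'$ a non-neighbour of $a$ inside the copy, then the path $(w,a,w')$ already spans $\geqslant2$ since $\varphi(a)\geqslant3$; a triangle $\{a,a',w\}$ with $a\sim a'$ already spans $\geqslant3$ since then one of $a,a'$ is coloured $\geqslant4$; the path $(a,w,a')$ when $a\not\sim a'$ spans $\geqslant2$ for the same reason as the first; and the pairs and triples lying inside the copy are fine because its colouring is local.

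The remaining copy in case (B) is flanked by $u_2$ of colour $k$ and $v$ of colour $1$. Properness confines its colours to $\{2,\dots,k-1\}$, the path $(u_2,b,v)$ is automatic because $|k-1|\geqslant2$, and inspecting the triangles $\{b,b',u_2\}$, $\{b,b',v\}$ and the paths $(b,u_2,b')$, $(b,v,b')$ (for $b,b'$ in the copy) shows that what I must supply is a local coloring of $L(k)$ with colours in $\{2,\dots,k-1\}$ that uses each of the colours $2$ and $k-1$ at most once and leaves no edge of $L(k)$ with both ends in $\{2,3\}$ or both ends in $\{k-2,k-1\}$. I would exhibit such a coloring explicitly, using that $L(k)=C(n,r)$ is $K_n$ minus a matching: for $k=2\tau+2$, list the $2\tau$ vertices so that matched pairs are consecutive in the list and assign the colours $2,3,\dots,2\tau+1$ along it, so that the vertices receiving $2$ and $3$, and those receiving $2\tau$ and $2\tau+1$, form two matched (hence non-adjacent) pairs; for $k=2\tau+3$, colour the two unmatched vertices $2$ and $k-1$ and give both vertices of the $i$-th matched pair the colour $2i+2$, so that only even colours occur; for $k=3$ the copy is a single vertex, coloured $2$. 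That each of these is a local coloring of $L(k)$ is a direct check: a triangle of $C(n,r)$ contains no matched pair, so its largest and smallest colours differ by $\geqslant3$, while a path $(b,c,b')$ through a common neighbour $c$ of a matched pair $b,b'$ is handled by the spacing of the assignment, since $\varphi(c)\notin\{\varphi(b),\varphi(b')\}$ and the colours involved are either two consecutive integers or all even.

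Finally I would run through all $2$-element and $3$-element subsets $S$ of $\mathcal{L}_k(u_1,u_2,v)$: those inside a single copy and those meeting a copy together with one or two of $u_1,u_2,v$ are covered by the two preceding paragraphs, and the ``crossing'' subsets are harmless because $u_1\not\sim u_2$, $u_1$ has no neighbour in the second copy, $u_2$ has no neighbour in the first, and the two copies of $L(k)$ have no edge between them; hence the only crossing $S$ carrying more than one edge is a path $(a,v,b)$ with $a$ in the first copy and $b$ in the second, and there $\varphi(a)\geqslant3$ makes the pair $\{a,v\}$ span $\geqslant2$. I expect the real obstacle to be the explicit coloring demanded in case (B): producing a local coloring of a near-complete graph that is thrifty enough with its two extreme colours to respect all four side conditions. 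The closing subset-by-subset check should be long but mechanical.
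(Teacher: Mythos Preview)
Your proposal is correct and follows essentially the same approach as the paper: reduce by the two symmetries to two representative cases, colour the ``easy'' copies (those whose two special neighbours share a colour) with a shifted local colouring of $L(k)$ supplied by Theorem~\ref{thrc}, and for the ``mixed'' copy give exactly the explicit colourings the paper uses (matched pairs receive consecutive colours $(2,3),(4,5),\dots,(k-2,k-1)$ when $k$ is even; the two unmatched vertices receive $2$ and $k-1$ and each matched pair receives a common even colour when $k$ is odd). The only differences are cosmetic --- you normalize to $\varphi(v)=1$ rather than $\varphi(v)=k$, you use the sharper $(k-3)$-colour palette $\{3,\dots,k-1\}$ on the easy copies where the paper is content with $\{1,\dots,k-2\}$, and you carry out the subset-by-subset verification that the paper simply asserts.
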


\begin{proof}
There are only two possible cases up to symmetry.

\textit{Case 1.} If $\varphi(u_1)=\varphi(u_2)=\varphi(v)=k$, then we can use $1,\ldots,k-2$ to get a local $k$-coloring of both copies of $L(k)$, which is possible by Theorem~\ref{thrc}.

\textit{Case 2.} Now let $\varphi(u_1)=\varphi(v)=k$ and $\varphi(u_2)=1$. Again, we construct a local coloring of the first copy of $L(k)$ using $1,\ldots,k-2$ only. Our strategy for the second copy depends on the parity of $k$; if $k=2\tau+2$, then we color the vertices of $L(k)=C(2\tau,\tau)$ so that the endpoints of removed edges get indexes $(2,3)$, $(4,5)$, $\ldots$, $(k-2,k-1)$, thus completing a local $k$-coloring of $\mathcal{L}_k(u_1,u_2,v)$.

If $k=2\tau+3$, then we color the vertices of $L(k)=C(2\tau,\tau-1)$ so that the vertices not adjacent to removed edges get indexes $2$ and $k-1$, and the removed edges are $(4,4)$, $(6,6)$, $\ldots$, $(2\tau,2\tau)$. Actually, this assignment is a local $k$-coloring of $\mathcal{L}_k(u_1,u_2,v)$ not only when $\tau\geqslant 1$ but also when $\tau=0$, but anyway the case of $k=3$ can be easily treated separately.
\end{proof}

Now let us define the graph $\mathcal{SM}_k(u_1,u_2,v)$ similarly to $\mathcal{L}_k(u_1,u_2,v)$ but with $L(k)$ replaced by the complete graph on $k-2$ vertices. In this case, Lemmas~\ref{lemLK1} and~\ref{lemLK2} will stay true if we replace $\mathcal{L}$ by $\mathcal{SM}$ and `local' by `semi-matching' in their formulations. We do not provide the proofs because they can be easily reconstructed from the arguments above.

\section{Completing the proof}

\begin{thr}\label{lem55}
Problem~\ref{prob14} is NP-hard.
\end{thr}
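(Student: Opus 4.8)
I plan to reduce NAE $3$-SAT (Problem~\ref{prob13}), which is NP-complete by~\cite{Scha}, to Problem~\ref{prob14}; the same construction, with $\mathcal{L}_k$ replaced throughout by $\mathcal{SM}_k$, will reduce it to the semi-matching version. Given an instance with variable set $X$ and clause set $L$, form a pair $(G,U)$ as follows. For each $x\in X$ take two vertices $w_x,w_{\overline x}$ joined by an edge. For each clause $c=(\ell_1,\ell_2,\ell_3)\in L$ take three fresh vertices $a_1^c,a_2^c,b^c$, build on them a fresh copy of the gadget $\mathcal{L}_k(a_1^c,a_2^c,b^c)$ of Definition~\ref{defnLK}, and add the edges $a_1^cw_{\overline{\ell_1}}$, $a_2^cw_{\overline{\ell_2}}$, $b^cw_{\ell_3}$. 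Let $U$ list all the singletons $\{w_x\}$, $\{w_{\overline x}\}$, $\{a_1^c\}$, $\{a_2^c\}$, $\{b^c\}$. As $k$ is fixed, $(G,U)$ is produced in polynomial time.

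I would then set up the dictionary that reads colour $k$ as \emph{true} and colour $1$ as \emph{false}. In any colouring $\varphi$ of $G$ admissible for $U$ in the sense of Problem~\ref{prob14}, every $w$-, $a$- and $b$-vertex is coloured $1$ or $k$, and the edges above force $\varphi(w_x)\ne\varphi(w_{\overline x})$, $\varphi(a_i^c)\ne\varphi(w_{\overline{\ell_i}})$ and $\varphi(b^c)\ne\varphi(w_{\ell_3})$; declaring $x$ true exactly when $\varphi(w_x)=k$, a direct check gives $\varphi(a_i^c)=k$ iff $\ell_i$ is true and $k+1-\varphi(b^c)=k$ iff $\ell_3$ is true, so $\varphi(a_1^c)$, $\varphi(a_2^c)$, $k+1-\varphi(b^c)$ all coincide precisely when $\ell_1,\ell_2,\ell_3$ get a common truth value, which for the vertices $a_1^c,a_2^c,b^c$ means exactly the two colourings $(1,1,k)$ and $(k,k,1)$ that Lemma~\ref{lemLK1} forbids. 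Hence, if $(G,U)$ has an admissible colouring, its restriction to each copy of $\mathcal{L}_k$ is a local $k$-colouring avoiding those two patterns, so no clause of $L$ is violated in the NAE sense and the induced assignment solves the instance. Conversely, from a solution of the NAE instance, colour every $w$-, $a$- and $b$-vertex by $1$ or $k$ via the dictionary; then for each $c$ the triple $\varphi(a_1^c),\varphi(a_2^c),k+1-\varphi(b^c)$ is non-constant, and Lemma~\ref{lemLK2} extends the colouring over the copy of $\mathcal{L}_k$ attached to $c$.

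The step I expect to be the main obstacle is verifying that in this last direction the extensions chosen clause by clause fit together into one local (resp.\ semi-matching) $k$-colouring of $G$: one must exclude every forbidden set of two or three vertices straddling two clause gadgets. This is exactly where the auxiliary vertices $a_i^c,b^c$ pay off. The only vertices belonging to more than one gadget are the $w$-, $a$- and $b$-vertices, and all of them are coloured $1$ or $k$; since $k\ge 3$, these two colours differ by $k-1\ge 2$ and neither equals $2$ or $k-1$. A case analysis on how a set $S$ with $|S|\in\{2,3\}$ can meet two gadgets then shows that $S$ either lies in a single copy of $\mathcal{L}_k$ (where the colouring placed there is already valid), or induces at most one edge (automatic once properness is checked, and properness is clear because every edge of $G$ lies in a single copy of $\mathcal{L}_k$ or is one of the forced edges among $1/k$-coloured vertices), or contains two adjacent vertices coloured $1$ and $k$; in the last case this pair differs by $k-1\ge 2$ and so already witnesses the requirement for any path on three vertices, while a triangle straddling two gadgets cannot occur, as it would be properly coloured using only the two colours $1$ and $k$. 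Carrying out this bookkeeping carefully --- enumerating the few ways a triple can straddle two gadgets and checking each --- is the fiddly core of the argument. For the semi-matching version nothing changes: such a colouring is simply a proper colouring with no ``bad path'' $a\!-\!b\!-\!c$ having $\varphi(a)=\varphi(c)=\varphi(b)\pm1$, the above analysis applies verbatim, and the semi-matching analogues of Lemmas~\ref{lemLK1} and~\ref{lemLK2} are available.
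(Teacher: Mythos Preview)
Your reduction is correct, but it differs from the paper's in one structural choice that changes how much work the verification requires. The paper does not introduce any edges between the gadgets, or between gadgets and the literal vertices; instead it exploits \emph{non-singleton} sets $U_\chi$. For each literal $\chi$, the paper lets $U_\chi$ collect the Step~1 vertex for $\chi$ together with every gadget vertex $u_\chi$ or $v_\chi$ carrying the label $\chi$, so that Problem~\ref{prob14} itself forces all of these to receive one common colour in $\{1,k\}$. Consequently the graph $G$ built there is a disjoint union of the isolated literal edges and of pairwise disjoint copies of $\mathcal{L}_k$; a colouring is local (resp.\ semi-matching) iff its restriction to each connected component is, and the gluing issue you flag as ``the main obstacle'' simply never arises.

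Your version keeps every $U_i$ a singleton and enforces the consistency through the extra edges $a_i^cw_{\overline{\ell_i}}$, $b^cw_{\ell_3}$. This is sound, and your case analysis does cover the resulting cross-gadget configurations: any $2$- or $3$-set not contained in a single $\mathcal{L}_k$ copy either induces at most one edge, or contains an adjacent pair coloured $\{1,k\}$ (difference $k-1\ge 2$), and no cross-gadget triangle can occur because all cross edges have a $w$-endpoint and every such triangle would have to be properly $2$-coloured with $\{1,k\}$. What you lose relative to the paper is brevity: you must carry out this bookkeeping, whereas the paper's use of the equality feature of Problem~\ref{prob14} eliminates it in one line. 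What you gain is that your argument in fact establishes NP-hardness of the a priori weaker special case of Problem~\ref{prob14} in which every $U_i$ is a singleton.
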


\begin{proof}
We construct the instance $(G,U)$ of Problem~\ref{prob14} depending on an instance $(X,L)$ of Problem~\ref{prob13} as follows. 

\textit{Step 1.} For every variable $x\in X$, we add vertices corresponding to $x$ and $\overline{x}$ and connect them by an edge.

\textit{Step 2.} For every tuple $(a,b,c)\in L$, we add a copy of the graph $\mathcal{L}_k(u_a,u_b,v_{\overline{c}})$. 

\textit{Step 3.} We define $2|X|$ subsets $U_\chi$ corresponding to the variables in $X$ and their negations. Namely, we define $U_\chi$ as a union of the vertex corresponding to $\chi$ as in Step~1 and all the vertices $u_\chi$ and $v_{\chi}$ as in Step~2.

Clearly, the mapping $(X,L)\to (G,U)$ can be computed in polynomial time. Let us check that it is a reduction; assume that $\varphi$ is a coloring of $G$ as in Problem~\ref{prob14}. The edges added in Step~1 guarantee that the classes $U_\chi$ and $U_{\overline{\chi}}$ should get opposite colors in $\{1,k\}$, so we can think of the color choice of every $U_x$ as a truth assignment of a variable $x\in X$. Since the copies of $\mathcal{L}_k(u_a,u_b,v_{\overline{c}})$ are disjoint, $\varphi$ is a local coloring if and only if its restriction to every such copy is local, which is possible if and only if $a,b,c$ are not all equal, according to Lemmas~\ref{lemLK1} and~\ref{lemLK2}.
\end{proof}

Replacing every appearance of $\mathcal{L}$ by $\mathcal{SM}$ and `local' by `semi-matching' in the proof above, we get the NP-hardness of the semi-matching version of Problem~\ref{prob14}. Applying Lemma~\ref{lemred}, we complete the proof of Theorem~\ref{thrmain}.


\bigskip

This research was financially supported by the Russian Science Foundation (project No. 17-71-10229).

\end{document}